\theoremstyle{plain}
\newtheorem{thm}{Theorem}
\newtheorem{conj}[thm]{Conjecture}
\newtheorem{lem}[thm]{Lemma}
\newtheorem{prop}[thm]{Proposition}
\theoremstyle{definition}
\newtheorem{defin}[thm]{Definition}
\theoremstyle{remark}
\newtheorem{rmk}[thm]{Remark}
\def\Char{{\mathrm{Char}}}
\def\disc{{\textup{disc}}}
\def\int{{\text{int}}}
\def\mod{{\textup{mod} \;}}
\def\rk{{\mathrm{rk}}}
\newcommand{\into}{\hookrightarrow}
\begin{document}

\title[Three lens space summands from the Poincar\'e homology sphere]%
{Three lens space summands from the Poincar\'e homology sphere}

\author[Jacob Caudell]{Jacob Caudell}
\begin{abstract}
    A regular fiber of the Seifert fibering of the Poincar\'e homology sphere admits a Dehn surgery to $L(2,1)\#L(3,2)\#L(5,4)$. We prove that this is the only knot in the Poincar\'e homology sphere with a surgery to a connected sum of more than two lens spaces.
\end{abstract}

\address{Department of Mathematics, Boston College\\ Chestnut Hill, MA 02467}

\email{caudell@bc.edu}
\maketitle

\section{Introduction.}

\subsection{Background.}
A classical theorem, independently proved by Lickorish \cite{Lic62} and by Wallace \cite{Wal60}, implies that for any pair of closed orientable 3-manifolds $Y$ and $Y'$, there exists a link $L\subset Y$ admitting a Dehn surgery to $Y'$. When can we characterize a knot in a given 3-manifold by the Dehn surgeries it admits?

Since Moser's classification of surgeries on torus knots in $S^3$ almost 50 years ago \cite{Mos71}, \emph{Dehn surgery characterization problems} have sustained the interest of 3-manifold topologists for decades---for example, there is the Berge conjecture \cite[Problem 1.78]{Kir10} and the cabling conjecture \cite[Conjecture A]{GAS86}, \cite[Problem 1.79]{Kir10}, among other problems. One of the most celebrated results in this direction to date is the Dehn surgery characterization of the unknot: the unknot is the only knot in $S^3$ that admits a non-trivial surgery to $S^3$ \cite{GL89}. In this paper we give a Dehn surgery characterization of a knot in the Poincar\'e homology sphere $\mathcal P$. 
\subsection{A notable surgery.}
Consider $\mathcal P$ as the Seifert fibered space $M((2,-1),(3,1),(5,1))$ \cite[Section 1.6]{Sav12}, and note that any Seifert fibering of $\mathcal P$ is isotopic to this one \cite[Corollaire 4]{BO91}. Let $F\subset \mathcal P$ be a regular fiber of $\mathcal P$, the isotopy class of which is unambiguous. Note that the two surgery diagrams in Figure 1 both present $L(2,1)\#L(3,2)\#L(5,4)$. The designated surgery slope of $0$ on $F$ specifies the slope represented by a regular fiber on the boundary of the \emph{exterior} of $F$ in $\mathcal P$, $\mathcal P_F$. We are now ready to state the main result of the current work. 
\begin{figure}
   \centering
\begin{tikzpicture}[scale=.95]
\begin{knot}[clip width = 5, flip crossing = 1, flip crossing = 4, flip crossing = 5, flip crossing = 7]
\strand[ultra thick] (0,0) to[curve through={(-2,2)..(0,4)..(2,2)}] (0,0);
\node at (0,-1.5) {\Large $-2$};
\node at (-3.5,2) {\Large $3$};
\node at (3.5,2) {\Large $5$};
\node at (-1.6,0.40) {\Large $0$};
\node[red] at (0,2.7) {\Large $0$};
\node[red] at (0,3.3) {\Large $F$};
\strand[ultra thick] (-2.25,1) to[curve through={(-3.25,2)..(-2.25,3)..(-1.25,2)}](-2.25,1);
\strand[ultra thick] (0,-1.25) to[curve through={(-1,-.25)..(0,.75)..(1,-.25)}](0,-1.25);
\strand[ultra thick] (2.25,1) to[curve through={(1.25,2)..(2.25,3)..(3.25,2)}](2.25,1);
\strand[ultra thick, red] (0,3) to[curve through = {(-1,4)..(0,5)..(1,4)}](0,3);

\node at (0+8,1) {\Large $-2$};
\node at (-2.25+8,3.25) {\Large $3$};
\node at (2.25+8,3.25) {\Large $5$};

\strand[ultra thick] (-2.25+8,1) to[curve through={(-3.25+8,2)..(-2.25+8,3)..(-1.25+8,2)}](-2.25+8,1);
\strand[ultra thick] (0+8,-1.25) to[curve through={(-1+8,-.25)..(0+8,.75)..(1+8,-.25)}](0+8,-1.25);
\strand[ultra thick] (2.25+8,1) to[curve through={(1.25+8,2)..(2.25+8,3)..(3.25+8,2)}](2.25+8,1);

\draw[dashed](4,5.5)--(4,-2);
\end{knot}
\end{tikzpicture}
    \caption{The two surgery diagrams above are related by a slam-dunk in Kirby calculus. They both present $L(2,1)\#L(3,2)\#L(5,4)$.}
    \label{fig:my_label}
\end{figure}
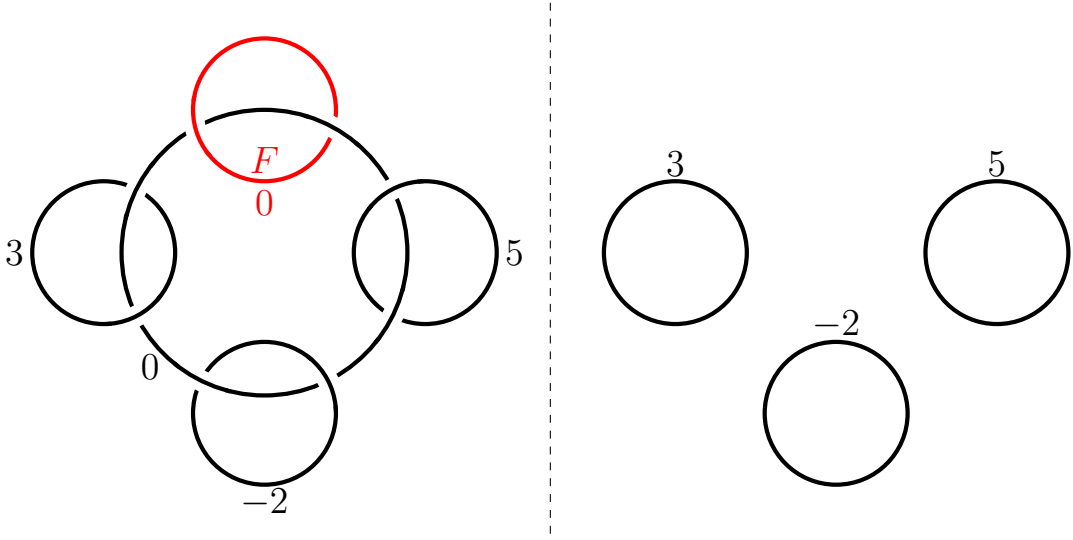

\begin{thm}
Let $K \subset \mathcal P$. Suppose $K$ admits a Dehn surgery to a connected sum of more than two lens spaces. Then $K = F$.
\end{thm}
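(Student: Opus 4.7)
The plan is to combine a homological reduction, the L-space knot machinery of Ozsv\'ath--Szab\'o and its adaptation to knots in Brieskorn spheres, and a lattice-theoretic constraint in the spirit of Greene's solution of the lens space realization problem, and finally to match the resulting Alexander polynomial / knot Floer data against that of the regular fiber $F$.

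First, I would reduce to a very constrained surgery slope. Since $\mathcal P$ is an integer homology sphere and $|H_1(L(2,1)\#L(3,2)\#L(5,4))|=30$, a slope $p/q$ surgery on $K$ satisfies $|p|=30$; reorienting, we take $p=30$. Because $\mathcal P \cong \Sigma(2,3,5)$ is an L-space and a connected sum of lens spaces is an L-space, $K$ is an \emph{L-space knot} in $\mathcal P$. This forces $\widehat{HFK}(\mathcal P,K)$ to be ``thin'' in the sense of Ozsv\'ath--Szab\'o/Hedden--Rasmussen type results for knots in L-spaces: the bigraded group is determined by an Alexander polynomial whose coefficients are $\pm 1$ with alternating signs, and each nonzero Alexander grading supports $\widehat{HF}$ rank one. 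I would also argue that $q=1$, either by appealing to the integrality of reducible surgery slopes for knots in homology spheres (Gordon--Luecke, Matignon--Sayari style results) or by ruling out non-integer slopes directly via the absence of longitudinal cabling in $\mathcal P$ compatible with a three-summand reducible filling.

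Second, I would extract numerical invariants of $K$ from a $\spc$-refined comparison of $d$-invariants. Writing the correction terms of $\mathcal P_K(30)$ in terms of the torsion coefficients $t_i(K)$ of the Alexander polynomial (the knot-surgery formula for $d$-invariants, using that $\mathcal P$ has $d$-invariant $-2$), and equating these, over all $30$ $\spc$-structures, with the explicit $d$-invariants of $L(2,1)\#L(3,2)\#L(5,4)$, I would obtain a changemaker-style constraint: the ``gap function'' of $K$ must agree with a specific integer vector realizing an embedding into the standard negative-definite lattice on $30$ generators coming from the natural 2-handle cobordism filled with a rational homology ball. The lens space summands $L(2,1)$, $L(3,2)$, $L(5,4)$ then force this changemaker vector to be the one corresponding to the exceptional fiber data $(2,3,5)$, which uniquely specifies the Alexander polynomial: it must equal the Alexander polynomial of the regular fiber $F$, namely the $(2,3,5)$-\emph{torus-like} staircase polynomial.

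Third, I would identify $K$ with $F$ geometrically. The polynomial obtained above is the polynomial of the singular fiber of genus $4$ in the $E_8$ plumbing; knot Floer homology therefore recognizes $K$ as an L-space knot in $\mathcal P$ of genus $4$ whose $\widehat{HFK}$ agrees with that of $F$. Combined with the fact that the $(-2)$-Dehn filling on $K$ is realized by the same lens space triple as for $F$ via the slam-dunk in Figure~1, I would promote the Floer-theoretic agreement to an actual isotopy by invoking the JSJ/Seifert structure of $\mathcal P_F$ and the classification of knots in $\mathcal P$ admitting a Seifert-fibered exterior of multiplicity $(2,3,5)$: the only such knot is the regular fiber.

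The main obstacle will be the second step: producing a clean lattice-theoretic constraint strong enough to pin down the Alexander polynomial uniquely. In $S^3$ this is Greene's changemaker argument, but in $\mathcal P$ one must carry along the non-trivial $d$-invariant of the source and cap off a handle cobordism with the negative $E_8$ plumbing, so the embedding problem is an embedding of a changemaker lattice into $E_8 \oplus -\bZ^{29}$ (or similar) with prescribed pairings. Ensuring that the only such embedding compatible with three lens space summands of the stated orders is the one coming from $F$ is the delicate combinatorial core of the proof.
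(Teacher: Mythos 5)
Your outline diverges from the paper's strategy in a way that leaves genuine gaps at both ends. First, you assume from the outset that the surgered manifold is $L(2,1)\#L(3,2)\#L(5,4)$, so that $|p|=30$; but the theorem's hypothesis is a surgery to \emph{any} connected sum of more than two lens spaces, with unconstrained orders, and the identification of the target (and of the slope $30$) is a conclusion of the proof, not an input. Second, your justification for $q=1$ does not exist as a theorem: integrality of reducible surgery slopes is a statement about knots in $S^3$, and in $\mathcal P$ it is false in general---exceptional fibers admit non-integral reducible surgeries (to connected sums of \emph{two} lens spaces), which is exactly why the paper must treat the non-integral case separately (Lemma 13) after first establishing the strict bound $p/q>2g(K)-1$ (Proposition 11) so that Matignon--Sayari produces a reducing sphere meeting the surgery core in two points. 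Third, and most seriously, your endgame is to pin down $\Delta_K$ (or $\widehat{HFK}$) by a changemaker-type embedding into something like $-E_8\oplus-\mathbb Z^{n}$ and then ``promote'' agreement of Floer data with that of $F$ to an isotopy $K=F$. You acknowledge the lattice step is not done, but even granting it, no result lets knot Floer homology detect the regular fiber of $\mathcal P$: your appeal to the Seifert structure of $\mathcal P_K$ presupposes that $\mathcal P_K$ is Seifert fibered (equivalently, contains a suitable essential annulus), which is precisely the geometric content that has to be proven and which Floer homology does not supply.

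For contrast, the paper uses the Floer-theoretic and lattice input only to prove the strict genus bound $p/q>2g(K)-1$: when $p\le 2g(K)-1$ the $-E_8$ possibility in the Fr\o yshov/Donaldson-type Theorem 4 is excluded (Lemma 10), the trace embedding makes $\tau$ a changemaker (Theorem 3), and a changemaker lattice has at most two indecomposable summands, whereas a connected sum of three or more lens spaces bounds a sharp $4$-manifold whose form has at least three; negative slopes are excluded by a definiteness argument. Everything after that is cut-and-paste topology: Matignon--Sayari gives an essential sphere meeting the core twice, hence an essential annulus in $\mathcal P_K$, and atoroidality of $\mathcal P$ forces $\mathcal P_K$ to be a union of two solid tori along that annulus, identifying $K$ (after the cabling reduction of Lemma 14) as a regular fiber. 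If you want to salvage your approach, you would need both a classification of the relevant changemaker-type embeddings in the presence of an $-E_8$ summand for arbitrary lens space targets and a genuinely new ``Floer data determines the knot'' statement in $\mathcal P$; the latter is not available, so as written the argument does not close.
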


\subsection{Changemakers reloaded.}
From the homological perspective, $\mathcal P$ is one of the simplest non-trivial 3-manifolds, as $H_*(\mathcal P) \cong H_*(S^3)$. From the cut and paste perspective, lens spaces---the 3-manifolds obtained by identifying two solid tori along their boundaries---are the simplest non-trivial 3-manifolds. From the perspective of Ozsv\'ath--Szab\'o's Heegaard Floer homology, $\mathcal P$, lens spaces, and connected sums thereof are all as simple as possible, as they realize equality in the bound $\rk\ \widehat{HF}(Y)\geq |H_1(Y)|$. We call such a 3-manifold an \emph{L-space}. We call a knot in any 3-manifold with a non-trivial surgery to an L-space an \emph{L-space knot}.

Our proof of Theorem 1 relies on the same pair of complementary genus bounds as did Greene's proof of the cabling conjecture for connected sums of lens spaces in \cite{Gre15}. In particular, we make use of the fact that $\mathcal P$ and $L(p,q)$ are L-spaces. Recall that for $K$ a knot in an integer homology 3-sphere there is a canonical identification of the set of slopes on $K$ with $\mathbb Q \cup \{1/0\}$. An appeal to the surgery exact triangle in Heegaard Floer homology gives us a favorable genus bound: for $K\subset \mathcal P$ and $p/q>0$, if \emph{$p/q$-surgery} on $K$, denoted by $K(p/q)$, is an L-space, then $p/q\geq 2g(K)-1$, where here $g(K)$ denotes the minimum genus of an orientable surface in $\mathcal P$ bounded by $K$. We first show that if $K(p/q)$ is a connected sum of more than two lens spaces, then $p/q>2g(K)-1$. With this strict inequality, we may use the work of Matignon--Sayari \cite{MS03}, building on work of Hoffman \cite{Hof98}, to find an essential 2-sphere in $K(p/q)$ that meets the core of the surgery in two points. With this 2-sphere in mind, we obtain a Seifert fibering of $\mathcal P_K$ from which we deduce Theorem 1. We show that $K(2g(K)-1)$ is never a connected sum of more than two lens spaces by way of \emph{changemaker lattice embeddings}.

\begin{defin}
Let $\{e_0,e_1,\ldots, e_n\}$ be an orthonormal basis for $-\mathbb Z^{n+1}$. A vector $\sigma = (\sigma_0,\sigma_1,\ldots, \sigma_n) \in -\mathbb Z^{n+1}$ is a \emph{changemaker} if $0\leq \sigma_0\leq \ldots\leq \sigma_n$ and for all $i \in \{1, \ldots, n\}$, $\sigma_i\leq \sum_{j=1}^{i-1}\sigma_j + 1$. A lattice $L$ is a \emph{changemaker lattice} if $L$ embeds in $-\mathbb Z^{\rk L+1}$ as the orthogonal complement to a changemaker $\sigma\in -\mathbb Z^{\rk L +1}$.
\end{defin} 

For $X$ a compact 4-manifold, denote by $Q_X$ the free $\mathbb Z$-module $H_2(X)/\text{Tors}$ equipped with the integer-valued symmetric bilinear form given by the intersection pairing of surfaces in $X$. Along the way to Theorem 1, we prove the following.

\begin{thm}
Let $K\subset \mathcal P$ be an L-space knot. If $K(2g(K)-1)$ bounds a sharp (cf. Definition 6) 4-manifold $X$ with $\rk\ {H_2(X)}=n$ and $H_1(X)$ torsion-free, then $Q_X$ embeds in $-\mathbb Z^{n+1}$ as the orthogonal complement to a changemaker $\sigma$ with $\langle \sigma, \sigma\rangle = -(2g(K)-1)$.
\end{thm}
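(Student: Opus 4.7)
The plan is to adapt Greene's changemaker machinery from \cite{Gre15} to the setting of $\P$. Set $p = 2g(K)-1$. Write $P$ for the $-E_8$ plumbing bounding $\P$---negative definite and sharp, with $Q_P = -E_8$---and write $W_p(K)$ for the trace cobordism from $\P$ to $K(p)$ obtained by attaching a single 2-handle to $\P\times[0,1]$ along $K$ with framing $p$, so that $Q_{W_p(K)} = (p)$.

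First I would compute the $d$-invariants of $K(p)$ in two ways. Since $K$ is an L-space knot, the Ozsv\'ath--Szab\'o mapping cone formula for L-space knots in integer homology L-spaces gives
\[d(K(p),[i]) = d(\P) + d(L(p,1),[i]) - 2V_i(K),\]
where the torsion coefficients $V_i(K)\geq 0$ are non-increasing in $i$ and vanish for $i\geq g(K)$. On the other hand, sharpness of $X$ gives
\[d(K(p),\mathfrak{t}) = \max\Big\{\tfrac{\chi^2+n}{4}\ :\ \chi\in \Char(Q_X),\ [\chi]=\mathfrak{t}\Big\}\]
for each $\mathfrak{t}\in\Spc(K(p))$. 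Equating these two expressions pins down the shortest-characteristic norm squared in $Q_X$ in each $\Spc$-class as an explicit function of the $V_i(K)$.

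Next, I would form the closed 4-manifold $Z = P \cup_\P W_p(K) \cup_{K(p)} (-X)$, whose intersection form is $Q_Z = -E_8 \oplus (p) \oplus (-Q_X)$. Although $Q_Z$ is indefinite---so Donaldson's theorem does not diagonalize it as a whole---the sharpness of $P$ for $\P$ is realized by the zero characteristic vector of the even unimodular lattice $-E_8$, and combined with the $d$-invariant matching above this constrains the $\Spc$-data on $Z$ enough to yield an embedding $Q_X \oplus (-p) \hookrightarrow -\bZ^{n+1}$ of the rank-$(n+1)$ orthogonal complement to the primitive $-E_8$ summand. Writing $\sigma$ for the image of the generator of $(-p)$, we then have $\langle\sigma,\sigma\rangle = -p = -(2g(K)-1)$ and $Q_X \cong \sigma^\perp$; the changemaker property of $\sigma$ then follows, essentially as in \cite{Gre15}, by translating the monotonicity and gap structure of the L-space-knot torsion coefficients $V_i(K)$ into the subset-sum inequalities on the entries of $\sigma$.

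The principal obstacle is the embedding step. In Greene's $S^3$ argument the analogous closed 4-manifold $X\cup -W_p(K)$ is already negative definite and Donaldson's theorem yields the embedding into $-\bZ^{n+1}$ immediately. In our setting any smooth filling of $\P$ contains at least an $-E_8$ plumbing's worth of negative definite lattice, so $Z$ is necessarily indefinite; bridging the gap requires exploiting the specific Heegaard Floer structure of $\P$---namely, the sharpness of $P$ and the value of $d(\P)$---to decouple the $-E_8$ contribution from the rank-$(n+1)$ embedding that we need.
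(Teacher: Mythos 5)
Your overall strategy (compute $d$-invariants of $K(p)$ two ways, extract a changemaker from the resulting characteristic-vector constraints) is in the spirit of the paper, but the crucial embedding step has a genuine gap, and it is exactly the step where the paper uses a tool you never invoke. The paper does not close up along $\mathcal P$ at all: it forms $Z = -W_p(K)\cup_{K(p)}X$, which is \emph{negative definite with boundary} $\mathcal P$, and then applies Fr\o yshov's theorem (Theorem 4) --- the substitute for Donaldson's theorem in this setting --- to conclude $Q_Z\cong -\bZ^{n+1}$ or $Q_Z\cong -E_8\oplus-\bZ^{n-7}$. The $-E_8$ case is then ruled out (Lemma 10) by feeding the specific characteristic vector $\mathfrak c=(0,\dots,0,1,\dots,1)$, with eight zeros, into the inequality $\mathfrak c^2+(n+1)-4d(\mathcal P)\le -8t_i(K)$ of Lemma 9: this forces $t_i(K)=0$ for some $i$ with $2i=\langle\mathfrak c,\tau\rangle+p\le p$, hence $p>2g(K)-1$, contradicting $p=2g(K)-1$. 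Only after this does the changemaker analysis on $\{\pm1\}^{n+1}$-vectors proceed as in Greene, together with a rank-and-discriminant argument (via \cite[Lemma 3.10]{Gre13}) to show $Q_X$ is all of $\sigma^\perp$ rather than a finite-index sublattice.

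By contrast, your closed manifold $Z=P\cup W_p(K)\cup(-X)$ does not deliver the embedding. First, its intersection form is not $-E_8\oplus(p)\oplus(-Q_X)$: the gluing along $K(p)$, which has $|H_1|=p$, only exhibits that direct sum as a finite-index sublattice of $Q_Z$. Second, and more seriously, $Q_Z$ is indefinite of signature $n-7$, and there is no general mechanism --- and none supplied by ``sharpness of $P$ realized by the zero characteristic vector'' --- that decouples an $-E_8$ summand and produces an embedding $Q_X\oplus(-p)\hookrightarrow-\bZ^{n+1}$; indefinite odd unimodular forms are diagonalizable, but that gives a mixed-signature diagonal form with no control over where the positive-definite piece $-Q_X\oplus$(span of the $E_8$ directions) sits. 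The sentence ``this constrains the $\mathrm{Spin^c}$-data on $Z$ enough to yield an embedding'' is precisely the assertion that needs proof, and it is what Fr\o yshov's theorem plus the Lemma 10 exclusion actually accomplish. To repair your argument, replace the closed-up $Z$ by the bounded, negative definite $-W_p(K)\cup_{K(p)}X$ and cite Fr\o yshov; your $d$-invariant computations then slot into the exclusion of the $-E_8$ case and into the changemaker identity essentially as in the paper.
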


In particular, we make use of the following variant of Donaldson's theorem originally due to Fr\o yshov \cite{Fro96} as Greene suggests in \cite[Section 1.9]{Gre13}.

\begin{thm}
Let $Z$ be a smooth, oriented, compact 4-manifold with $\rk \ Q_Z = n$ and $\partial Z = \mathcal P$, oriented as above. If $Z$ is negative definite, then $Q_Z \cong -\mathbb Z^n$ or $Q_Z\cong -E_8 \oplus -\mathbb Z^{n-8}$, where $-E_8$ is the integer lattice whose pairing is given by the matrix
\[
\begin{bmatrix}
-2 & 1 & 1 & 0 & 1 & 0 & 0 & 0\\
1 & -2 & 0 & 0 & 0 & 0 & 0 & 0\\
1 & 0 & -2 & 1 & 0 & 0 & 0 & 0\\
0 & 0 & 1 & -2 & 0 & 0 & 0 & 0\\
1 & 0 & 0 & 0 & -2 & 1 & 0 & 0\\
0 & 0 & 0 & 0 & 1 & -2 & 1 & 0\\
0 & 0 & 0 & 0 & 0 & 1 & -2 & 1\\
0 & 0 & 0 & 0 & 0 & 0 & 1 & -2
\end{bmatrix}. 
\]\qed
\end{thm}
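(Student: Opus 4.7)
The plan is to combine the Ozsv\'ath--Szab\'o $d$-invariant inequality with Elkies' classification of unimodular lattices, invoking Fr\o yshov's original gauge-theoretic result for the subtle cases.

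First I would reduce to the case $b_1(Z) = 0$ by performing ambient surgery on loops generating $H_1(Z;\bQ)$; this operation preserves $Q_Z$ and negative-definiteness. Next, I would use the fact that with the orientation fixed in the introduction, $\mathcal P$ bounds the negative-definite $-E_8$ plumbing, giving $d(\mathcal P) = 2$ by the standard $HF^+$ computation. Applying the Ozsv\'ath--Szab\'o inequality
\[
c_1(\mathfrak s)^2 + n \leq 4\,d(\mathcal P) = 8
\]
to each $\spc$ structure $\mathfrak s$ on $Z$, and noting that every characteristic element of $Q_Z$ is realized as $c_1(\mathfrak s)$ of some $\mathfrak s$, I would deduce that $c \cdot c \leq 8 - n$ for every characteristic $c \in Q_Z$.

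Third, I would invoke the lattice-classification step. Elkies' theorem gives that $\max_c c \cdot c = -n$ forces $Q_Z \cong -\bZ^n$. In the boundary case $\max_c c \cdot c = 8 - n$, the $d$-invariant bound alone permits the exotic candidate $-D_{12}^+$ in rank 12 alongside the expected $-E_8 \oplus -\bZ^{n-8}$, so a stronger input is required. I would cite Fr\o yshov's original theorem \cite{Fro96}, which uses Seiberg--Witten Floer theory to rule out such exotic candidates, completing the proof.

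The hard part is this last step: the $d$-invariant alone is not sharp enough, and either Fr\o yshov's Seiberg--Witten analysis or an equivalent Heegaard-Floer refinement must be invoked. I would follow the approach outlined in \cite[Section 1.9]{Gre13}.
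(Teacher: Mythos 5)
Your proposal takes essentially the same route as the paper: the paper states this theorem without proof, attributing it to Fr\o yshov's gauge-theoretic work \cite{Fro96} as suggested in \cite[Section 1.9]{Gre13}, and the decisive step of your argument is likewise that citation. Your preliminary $d$-invariant and Elkies discussion is consistent with the paper's conventions (with this orientation $d(\mathcal P)=2$) and correctly identifies why the correction-term bound alone cannot exclude $-D_{12}^{+}$ and the other long-shadow lattices, but once Fr\o yshov's theorem is invoked that scaffolding is not actually needed.
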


\subsection{A generalized cabling conjecture for connected sums of lens spaces.} Contrast Theorem 1 with the abundance of knot surgeries from $S^3$ to connected sums of lens spaces. The 3-sphere admits infinitely many distinct Seifert fiberings over $S^2$ with two exceptional fibers; each torus knot appears as a regular fiber of some Seifert fibering of $S^3$. Each torus knot admits a surgery to a connected sum of lens spaces---as do appropriate cables thereof. Greene proved that torus knots and their cables account for all knots in $S^3$ with surgeries to connected sums of lens spaces. A lens space also admits infinitely many Seifert fiberings over $S^2$ with two exceptional fibers, and thus contains infinitely many knots with surgeries to a connected sum of lens spaces. Moreover, Baker showed that for any pair of integers $r$ and $s$, there are infinitely many lens spaces containing hyperbolic knots admitting surgeries to $L(r,1)\#L(s,1)$ \cite[Corollary 2]{Bak14}, subsuming all previously known examples of hyperbolic knots in lens spaces with surgeries to connected sums of lens spaces. The uniqueness of the surgery in Theorem 1 leads us to pose the following conjecture.

\begin{conj}
Let $K$ be a knot in a 3-manifold $Y$ admitting a Seifert fibering over $S^2$ with $n\geq 3$ exceptional fibers. If Dehn surgery on $K$ yields a connected sum of more than $n-1$ lens spaces, then there is a Seifert fibering of $Y$ where $K$ is a regular fiber. 
\end{conj}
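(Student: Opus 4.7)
The plan is to extend the two-part strategy used for Theorem 1 to a broader class of Seifert fibered bases by combining a Heegaard Floer genus inequality with a topological analysis of essential planar surfaces in the knot exterior $Y_K$. First, I would aim to show that under the hypothesis of the conjecture, $Y$ itself must be an L-space; the connect-sum decomposition of $K(p/q)$ into lens spaces, combined with the existence of a degree-one map from $K(p/q)$ to a manifold homologically close to $Y$, constrains the Heegaard Floer complexity of $Y$. Granted this, the Ozsv\'ath--Szab\'o L-space genus bound $p/q \geq 2g(K) - 1$ applies, and the goal becomes to upgrade it to a strict inequality whenever $K(p/q)$ has more than $n-1$ lens space summands.

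For the strict inequality, I would seek a direct analog of Theorem 2, generalized to the present setting: given a sharp 4-manifold $X$ filling $K(2g(K)-1)$, the intersection form $Q_X$ should embed in a lattice of the form $-\mathbb Z^{m+1} \oplus \Lambda_Y$ as the orthogonal complement of a changemaker-like vector, where $\Lambda_Y$ is a lattice determined by the $d$-invariants of $Y$ via a Fr\o yshov-type restriction on negative-definite fillings. Running the changemaker combinatorics on a connect sum of lens spaces should then cap the number of summands at $n-1$, in parallel with Greene's count for $S^3$ (cap at $1$, as $S^3$ has no exceptional fibers in its trivial genus-zero fibering) and this paper's count for $\mathcal P$ (cap at $2$).

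With the strict inequality $p/q > 2g(K)-1$ in hand, an application of Matignon--Sayari \cite{MS03} produces an essential 2-sphere in $K(p/q)$ meeting the core of the surgery in exactly two points, which dualizes to an essential planar surface $P \subset Y_K$. A JSJ analysis of $Y_K$ along $P$, together with the connect-sum decomposition of $K(p/q)$, should exhibit $Y_K$ as Seifert fibered over a disk with $n$ exceptional fibers whose fibering closes up to the given Seifert fibering of $Y$ with $K$ appearing as a regular fiber.

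The step I expect to be the main obstacle is the lattice-theoretic analog of Theorem 2. Outside of $\mathcal P$, computing the $d$-invariants of Seifert fibered L-spaces and identifying the precise lattice $\Lambda_Y$ appearing as a summand of $Q_X$ is delicate, and the changemaker criterion may have to be replaced by a weighted or twisted variant tailored to $Y$; assembling a uniform combinatorial statement that yields the cap $n-1$ across all $n$ simultaneously is the crux. A secondary difficulty lies in ruling out exotic gluings in the final JSJ step: one must ensure that the Seifert fibering recovered from $P$ is compatible with \emph{some} Seifert fibering of $Y$ (there may be several up to isotopy, as the Bonahon--Otal rigidity available for $\mathcal P$ fails in general), rather than merely with a graph-manifold decomposition of $Y$ that does not globalize.
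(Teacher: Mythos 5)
The statement you are addressing is Conjecture 5: the paper offers no proof of it, and in fact Remark 15 explains precisely why the author's own methods do not extend to it, so what you have written should be judged as a research program rather than a proof. Measured that way, it contains a concrete false step at the outset: you claim that the hypothesis forces $Y$ to be an L-space. It does not. Having a surgery to a connected sum of lens spaces (an L-space) places no such constraint on $Y$; for instance, Brieskorn spheres that are not L-spaces contain doubly primitive knots admitting integral lens space surgeries, and for those knots Rasmussen's work \cite{Ras07} shows the lens space slope satisfies $|p|\leq 2g(K)-1$, so the Ozsv\'ath--Szab\'o bound $p/q\geq 2g(K)-1$ that your whole first stage rests on genuinely fails outside the L-space setting (this is exactly the failure mode flagged in Remark 15). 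Moreover, for a general Seifert fibered $Y$ over $S^2$ with $n\geq 3$ exceptional fibers, $K$ need not be nullhomologous, so $g(K)$, the distinguished longitude $\lambda$, and the Matignon--Sayari hypothesis of a slope bounding an orientable surface in the exterior \cite{MS03} may not even be available; your degree-one-map argument for ``homological closeness'' to $Y$ does not repair this.

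Your second stage --- a changemaker-type embedding relative to a Fr\o yshov-style restriction on definite fillings of $Y$ --- is the right shape of generalization, but you correctly identify it as the crux and it is not something you can currently supply: the analogue of Theorem 4 (the rigidity of definite fillings) is special to $\mathcal{P}$, and no uniform lattice $\Lambda_Y$ or weighted changemaker criterion producing the cap of $n-1$ summands is known. The topological endgame is the most plausible part: as Remark 15 notes, once one has a reducing sphere meeting the surgery core in two points, the annulus argument of Section 3 adapts using the classification of essential tori in Seifert fibered spaces \cite{Jac80}, and the conjecture only asks for \emph{some} fibering of $Y$ in which $K$ is regular, so the failure of Bonahon--Otal rigidity \cite{BO91} is not by itself fatal. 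In short: the proposal is not a proof, its first step is incorrect as stated, and its central lattice-theoretic step is an open problem rather than a lemma; the statement remains a conjecture.
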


In private correspondence with Baker, we learned of a conjecture more general than Conjecture 5. It posits that if a one-cusped hyperbolic 3-manifold admits two exceptional Dehn fillings with one of the fillings non-prime, then the non-prime filling has only two summands. 


\subsection{Organization.}
In Section 2 we collect the necessary input from Heegaard Floer homology and lattice theory to prove that $K(2g(K)-1)$ is never a connected sum of more than two lens spaces for $K \subset \mathcal P$. In Section 3 we invoke the main theorem of Matignon--Sayari \cite[Theorem 1.1]{MS03} to show that a knot satisfying the hypotheses of Theorem 1 is a regular fiber of $\mathcal P$.

\subsection{Conventions.}
All manifolds are assumed to be smooth and oriented. All homology groups are taken with integer coefficients. Denote by $U$ the unknot in $S^3$. We orient the lens space $L(p,q)$ as $U(-p/q)$. 

\subsection{Acknowledgments.} 
Thanks to my advisor, Josh Greene, for many insightful and enjoyable conversations, for his enduring support, and for helpful feedback on an early version of this paper. Thanks to Ken Baker for an interesting conversation about Conjecture 5.

\section{Input from Floer homology.}

\subsection{A negative definite 4-manifold with boundary $\mathcal P$.}

Recall that to a rational homology sphere $Y$ equipped with a spin$^\text{c}$ structure $\mathfrak t$, Ozsv\'ath--Szab\'o associated a numerical invariant $d(Y,\mathfrak t)\in \mathbb Q$, called a \emph{correction term}, satisfying $d(-Y,\mathfrak t)=-d(Y,\mathfrak t)$. They also proved that if $Y$ is the boundary of a negative definite 4-manifold $X$, then 
\begin{equation}
    c_1(\mathfrak s)^2 + b_2(X) \leq 4d(Y, \mathfrak t)
\end{equation}
for every $\mathfrak s \in \text{Spin}^\text{c}(X)$ extending $\mathfrak t \in \text{Spin}^\text{c}(Y)$.

\begin{defin}
A negative definite 4-manifold $X$ is \emph{sharp} if, for every $\mathfrak t \in \text{Spin}^\text{c}(Y)$, there exists some extension $\mathfrak s \in \text{Spin}^\text{c}(X)$ attaining equality in the bound $(1)$. 
\end{defin}

Following Greene's construction in \cite[Section 2]{Gre15}, let us consider a knot $K$ in an integer homology sphere $Y$ and suppose that $K(p)$ bounds a negitive definite 4-manifold $X$ with $H_1(X)$ torsion-free, where $p$ is some positive integer. Denote the trace cobordism of $p$-surgery on $K$ by $W_p(K)$, and let $W=-W_p(K)$. The homology class of the closed surface obtained by capping off a Seifert surface for $K$ with the core of the 2-handle attachment, $\Sigma$, generates $H_2(W)$ and satisfies $\langle[\Sigma],[\Sigma]\rangle = -p$. Form the oriented 4-manifold 
\begin{equation}
    Z := W\cup_{K(p)}X,
\end{equation} and note that $Z$ is negative definite with $\rk \ Q_Z=\rk\ Q_W+\rk \ Q_X$ and $\partial Z = Y$. We identify $\text{Spin}^\text{c}(K(p)) \cong \mathbb Z/p\mathbb Z$ as follows. Every $\mathfrak t\in \text{Spin}^\text{c}(K(p))$ extends to some $\mathfrak s \in \text{Spin}^\text{c}(W_p(K))$, and the residue of $\langle c_1(\mathfrak s), [\Sigma]\rangle + p$ mod $2p$ is an even integer $2i$ that is independent of the choice of extension $\mathfrak s$. The assignment $\mathfrak t \mapsto i$ gives the desired identification. With this notation in place we state the following lemma. Its proof is no different than that of \cite[Lemma 2.3]{Gre15}, which treats the case when $Y=S^3$.

\begin{lem}
Suppose that $K(p)$ bounds a smooth, negative definite 4-manifold $X$ with $\rk \ H_2(X)=n$ and $H_1(X)$ torsion-free, and form $Z = W\cup X$ as in (2). Then every $i \in \text{Spin}^\text{c}(K(p))$ extends to some $\mathfrak s \in \text{Spin}^\text{c}(Z)$, and 
\begin{equation}
    c_1(\mathfrak s)^2 + (n+1) \leq 4d(K(p),i)-4d(U(p), i).
\end{equation}
Furthermore, if $X$ is sharp, then for every $i$ there exists some extension $\mathfrak s$ that attains equality in $(3)$.\qed
\end{lem}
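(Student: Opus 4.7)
The plan is to adapt the proof of \cite[Lemma 2.3]{Gre15}, which treats the case $Y = S^3$. The only feature of $Y$ used there is that it is an integer homology sphere, a property shared by $\P$, so the argument transfers essentially verbatim. I outline the main steps below.

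First I would verify existence of an extension $\s \in \Spc(Z)$ of a given $i \in \Spc(K(p))$. Since $Y$ is an integer homology sphere, $H_1(W) = 0$ and the restriction $\Spc(W) \to \Spc(K(p))$ is surjective; the extensions of a given $i$ form a $\bZ$-torsor, parameterized so that $k \in \bZ$ shifts $\L c_1(\s_W), [\Sigma]\R$ by $2kp$. On the other side, the long exact sequence of the pair $(X, K(p))$ combined with Lefschetz duality $H^3(X, K(p)) \cong H_1(X)$ forces $H^2(X) \to H^2(K(p))$ to be surjective once $H_1(X)$ is torsion-free; hence $i$ lifts to some $\s_X \in \Spc(X)$. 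A pair of lifts that agree on $K(p)$ glues to a spin$^c$ structure on $Z$.

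Next I would establish (3). Since $H_*(K(p))$ is torsion, Mayer--Vietoris yields $Q_Z \cong Q_W \oplus Q_X$, so $c_1(\s)^2 = c_1(\s_W)^2 + c_1(\s_X)^2$ for the restrictions $\s_W, \s_X$. Applying (1) to $X$ gives $c_1(\s_X)^2 + n \leq 4d(K(p), i)$. On the $W$-side, extensions of $i$ satisfy $\L c_1(\s_W), [\Sigma]\R \equiv 2i - p \pmod{2p}$, and since $\L [\Sigma], [\Sigma]\R = -p$ the maximum over extensions is $c_1(\s_W)^2 \leq -(2i - p)^2/p$; comparing with the known formula $4d(U(p), i) = (2i - p)^2/p - 1$ gives $c_1(\s_W)^2 \leq -4d(U(p), i) - 1$. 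Summing the two bounds yields (3). If moreover $X$ is sharp, one can pick $\s_X$ attaining equality in (1) together with $\s_W$ realizing the maximum, producing $\s \in \Spc(Z)$ that attains equality in (3).

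I expect no substantive obstacle here: every input above --- the computation of $c_1(\s_W)^2$, the additivity across $Z$, the formula for $d(U(p), i)$, and the inequality (1) itself --- depends only on $Y$ being an integer homology sphere, not on its being $S^3$.
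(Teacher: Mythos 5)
Your proposal is correct and is essentially the proof the paper intends: the paper simply defers to Greene's \cite[Lemma 2.3]{Gre15}, whose argument runs exactly along your lines (extend the spin$^c$ structure separately over $W$ and over $X$ using $H_1(X)$ torsion-free, glue along $K(p)$, apply the correction-term inequality (1) to $X$, and bound the square on the reversed trace by $-(2i-p)^2/p = -4d(U(p),i)-1$, with sharpness of $X$ giving equality). One small caveat: integrally, Mayer--Vietoris only yields a finite-index embedding $Q_W \oplus Q_X \hookrightarrow Q_Z$ rather than an isomorphism (as the paper itself notes in the proof of Theorem 3), but since $K(p)$ is a rational homology sphere the splitting holds over $\mathbb{Q}$, which is all that is needed for the additivity $c_1(\mathfrak s)^2 = c_1(\mathfrak s|_W)^2 + c_1(\mathfrak s|_X)^2$ that your argument uses.
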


We now assume that $Y$ is an L-space. Let $K\subset Y$ be an L-space knot, and consider the Alexander polynomial of $K$,
\begin{equation*}
    \Delta_K(T) = \sum_{j=-g}^ga_j\cdot T^j, \ \ g := \deg(\Delta_K),
\end{equation*}
and define the \emph{torsion coefficient}
\begin{equation*}
    t_i(K) = \sum_{j\geq 1}j \cdot a_{|i|+j}.
\end{equation*}
Tange noted that the torsion coefficients of an L-space knot in any L-space integer homology sphere $Y$ with irreducible exterior have many of the same properties as they do for L-space knots in $S^3$ \cite{Tan09}. Namely, for $i\geq 0$, they form a non-increasing sequence of non-negative integers \cite{OS05}, with $t_i(K) = 0$ if and only if $i\geq g = g(K)$ \cite{OS04}. Tange also states the following result explicitly. Its proof follows from that of \cite[Corollary 7.5]{OS03}, replacing $S^3$ by an arbitrary integer homology sphere L-space.

\begin{thm}
Let $K$ be an L-space knot in an integer homology sphere L-space $Y$, and let $p$ be a positive integer. Then the torsion coefficients and correction terms satisfy
\begin{equation}
    d(Y) - 2t_i(K) = d(K(p),i)-d(U(p),i),\text{ for all } |i| \leq p/2.
\end{equation}\qed
\end{thm}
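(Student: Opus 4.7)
The plan is to follow the proof of \cite[Corollary 7.5]{OS03} essentially line by line, substituting $Y$ for $S^3$ throughout and tracking the effect on absolute $\mathbb{Q}$-gradings. The main tool is the Ozsv\'ath--Szab\'o surgery formula, which computes $HF^+(K(p),i)$ together with its absolute grading as the homology of a mapping cone built from the sub- and quotient complexes $A_s^+$ and $B_s^+$ of $CFK^\infty(Y,K)$. This formula is a formal statement about a null-homologous knot in any closed oriented $3$-manifold; the $3$-sphere plays no distinguished role in its derivation, so the same formula applies verbatim to $K \subset Y$.

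I next invoke Tange's observation in \cite{Tan09} that for an L-space knot $K \subset Y$ in an integer homology sphere L-space, the complex $CFK^\infty(Y,K)$ is a staircase of exactly the same shape as for an L-space knot in $S^3$, with jumps determined by the coefficients $a_j$ of $\Delta_K(T)$, but with absolute grading shifted uniformly by $d(Y)$ (this shift is pinned down by the fact that $\widehat{HF}(Y)$ is one-dimensional in grading $d(Y)$). Substituting this staircase into the surgery formula, for $|i|\leq p/2$ the bottom of the $U$-tower in $HF^+(K(p),i)$ sits in the $A_i^+$-summand, at grading $d(U(p),i) + d(Y) - 2t_i(K)$, precisely as in the $S^3$ computation of \cite[Corollary 7.5]{OS03} with the extra overall shift by $d(Y)$. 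Since the correction term $d(K(p),i)$ is by definition the grading of the bottom of this tower, rearranging yields the claimed identity. The restriction $|i|\leq p/2$ is exactly what guarantees that the relevant part of the mapping cone is controlled by $A_i^+$, matching the $S^3$ setting.

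The main obstacle---and the reason the result is credited to Tange---is verifying that the absolute grading shift in the staircase is genuinely $d(Y)$ rather than some other quantity depending on $K$, and that the non-increasing sequence of non-negative torsion coefficients behaves just as in \cite{OS05,OS04}. Once that input is in place, the computation in \cite[Corollary 7.5]{OS03} transcribes without change: each appearance of $0 = d(S^3)$ is replaced by $d(Y)$, and the formula holds for all positive integers $p$, irrespective of whether $K(p)$ is itself an L-space.
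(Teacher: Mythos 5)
Your proposal is correct and is essentially the paper's argument: the paper simply asserts that the proof of \cite[Corollary 7.5]{OS03} goes through with $S^3$ replaced by an integer homology sphere L-space, and your transplantation---using the surgery formula together with Tange's observation that $CFK^\infty(Y,K)$ is the usual L-space-knot staircase with absolute gradings shifted by $d(Y)$, so that every occurrence of $d(S^3)=0$ becomes $d(Y)$---is exactly the content of that remark. The only difference is cosmetic: you phrase the computation via the mapping cone formula rather than the absolutely graded surgery arguments of \cite{OS03}, which if anything handles all positive integers $p$ (not just large or L-space slopes) more uniformly, and your handling of the restriction $|i|\leq p/2$ (so that $t_i\geq t_{p-i}$ and the $A_i^+$ tower governs the correction term) is the right point to single out.
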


We now focus our attention to the left-hand side of (3). The first Chern class map
\begin{equation*}
    c_1: \text{Spin}^\text{c}(Z) \to H^2(Z)
\end{equation*}
has image the set of \emph{characteristic covectors} for $Q_Z$. Identify $H_2(Z) \cong H^2(Z,Y)\cong H^2(Z)$ first by Poincar\'e duality then by the long exact sequence of a pair in cohomology; then this set corresponds to
\begin{equation*}
    \Char(Q_Z) = \big\{\mathfrak c \ |\  \langle \mathfrak c, v\rangle \equiv \langle v, v \rangle\ \mod 2 \text{ for all } v \in Q_Z\big\}.
\end{equation*}

Write $\tau$ for the image of the class $[\Sigma]$ under the inclusion $H_2(W)\into H_2(Z)$. 
With the preceding notation in place, the following lemma follows on combination of Lemma 7 with Theorem 8.
\begin{lem}
Let $K$ denote an L-space knot in an L-space integer homology sphere $Y$, and suppose that $K(p)$ bounds a smooth, negative definite 4-manifold $X$ with $\rk \ H_2(X) = n$ and $H_1(X)$ torsion-free. Then
\begin{equation}
    \mathfrak c^2 + (n+1)-4d(Y)\leq -8t_i(K)
\end{equation}
for all $|i|\leq p/2$ and $\mathfrak c \in \text{Char}(Q_Z)$ such that $\langle \mathfrak c, \tau\rangle + p\equiv 2i \ \mod 2p$. Furthermore, if $X$ is sharp, then for every $|i|\leq p/2$ there exists $\mathfrak c$ attaining equality in (5).\qed
\end{lem}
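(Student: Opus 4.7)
The plan is to prove Lemma 9 by direct substitution of Theorem 8 into Lemma 7, combined with the standard identification between spin$^{\text{c}}$ structures on the 4-manifold $Z$ and characteristic covectors for its intersection form. Because both inputs have already been established (Lemma 7 for the inequality on correction terms, Theorem 8 for the formula relating correction terms to torsion coefficients), no new geometric input is needed; the work is simply to align the indexing conventions on both sides.

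First, I would fix an index $i$ with $|i|\leq p/2$. By Lemma 7, for every such $i$ there exists some $\mathfrak s \in \Spc(Z)$ extending the spin$^{\text{c}}$ structure labeled by $i$ on $K(p)$, and for every such extension we have
\begin{equation*}
c_1(\mathfrak s)^2 + (n+1) \leq 4d(K(p),i) - 4d(U(p),i).
\end{equation*}
Now apply Theorem 8, which, since $Y$ is an L-space integer homology sphere and $K$ is an L-space knot, gives
\begin{equation*}
d(K(p),i) - d(U(p),i) = d(Y) - 2t_i(K).
\end{equation*}
Substituting into the previous display and rearranging yields
\begin{equation*}
c_1(\mathfrak s)^2 + (n+1) - 4d(Y) \leq -8t_i(K).
\end{equation*}

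Next, I would translate from spin$^{\text{c}}$ structures on $Z$ to characteristic covectors. Using the identification $H_2(Z) \cong H^2(Z)$ described before Lemma 9, the first Chern class map sends $\Spc(Z)$ onto $\Char(Q_Z)$; setting $\mathfrak c = c_1(\mathfrak s)$, the inequality above becomes the desired bound $\mathfrak c^2 + (n+1) - 4d(Y) \leq -8t_i(K)$. It remains to check that the congruence condition $\langle \mathfrak c, \tau\rangle + p \equiv 2i \pmod{2p}$ is precisely the statement that $\mathfrak s$ extends the spin$^{\text{c}}$ structure labeled $i$ on $K(p)$: this is the very definition of the identification $\Spc(K(p)) \cong \bZ/p\bZ$ recalled before Lemma 7, applied after noting that the class $\tau \in H_2(Z)$ is the image of $[\Sigma] \in H_2(W)$ under inclusion, so that pairings with $\tau$ compute $\langle c_1(\mathfrak s), [\Sigma]\rangle$ in $W$ for any extension $\mathfrak s$ of the spin$^{\text{c}}$ structure on $W$ determined by restricting $\mathfrak s|_Z$ to $W$.

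Finally, the sharpness clause follows immediately: if $X$ is sharp, then the ``furthermore'' in Lemma 7 provides, for each $|i|\leq p/2$, an extension $\mathfrak s$ attaining equality in the bound on correction terms, and this equality is preserved under the substitution from Theorem 8 (which is an equality, not an inequality). I expect no substantive obstacle in this argument; the only point requiring care is the bookkeeping around the spin$^{\text{c}}$ identification and the sign/orientation conventions, which are already fixed in the paper (in particular, the orientation $L(p,q) = U(-p/q)$ and the definition $W = -W_p(K)$).
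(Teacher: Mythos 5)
Your proposal is correct and matches the paper's intended argument: the paper simply asserts that Lemma 9 "follows on combination of Lemma 7 with Theorem 8," and your write-up spells out exactly that substitution, the identification of $c_1(\mathfrak s)$ with a characteristic covector, and the transfer of the sharpness clause from Lemma 7. No discrepancies to report.
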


Greene arrived at the notion of a changemaker lattice from a careful analysis of how Lemma 9 says elements of $\Char(Q_Z)$ of self pairing $-(n+1)$ must pair against $\tau$ when $Y= S^3$, in which case Donaldson's theorem implies $Q_Z\cong -\mathbb Z^{n+1}$. Changemaker lattices also arise for us by considering Theorem 4 in the case that a small positive integer surgery on an L-space knot in $\mathcal P$ bounds a sharp 4-manifold. 

\begin{lem}
Let $Y = \mathcal P$, and let $Z$ be as in (2). If $p \leq 2g(K)-1$, then $Q_Z\cong -\mathbb Z^{n+1}$.
\end{lem}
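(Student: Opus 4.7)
The plan is to apply Theorem 4 to the negative-definite 4-manifold $Z$, leaving the two possibilities $Q_Z \cong -\mathbb Z^{n+1}$ or $Q_Z \cong -E_8 \oplus -\mathbb Z^{n-7}$. The former is the desired conclusion, so I would derive a contradiction from the latter by producing a characteristic covector $\mathfrak{c}$ and a spin$^{c}$ index $i$ that together violate the inequality of Lemma 9.

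Assuming $Q_Z \cong -E_8 \oplus -\mathbb Z^{n-7}$, I would test against the characteristic covector $\mathfrak{c} = 0 \oplus \mathfrak{c}_0$, where $\mathfrak{c}_0 \in -\mathbb Z^{n-7}$ has every coordinate equal to $\pm 1$. This is characteristic because $E_8$ is even (so $0$ is characteristic in the $-E_8$ summand) while $\mathfrak{c}_0$ has all odd coordinates, and it achieves the maximal self-pairing $\mathfrak{c}^2 = -(n-7)$ among characteristic covectors of $Q_Z$.

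Next I would produce a spin$^c$ index $i$ meeting the constraint $\langle \mathfrak{c}, \tau \rangle + p \equiv 2i \pmod{2p}$. Writing $\tau = \tau_E + \tau_0$, the key observation is that $|\tau_E|^2$ is even (since $E_8$ is even), so $\langle \mathfrak{c}, \tau \rangle = \langle \mathfrak{c}_0, \tau_0 \rangle$ has the same parity as $|\tau_0|^2 = p - |\tau_E|^2$, namely the parity of $p$. The constraint therefore reduces to a single congruence on $i$ modulo $p$, and since $\{-\lfloor p/2 \rfloor, \dots, \lfloor p/2 \rfloor\}$ covers every residue class mod $p$, a valid $i$ with $|i| \leq p/2$ exists. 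The hypothesis $p \leq 2g(K)-1$ then forces $|i| \leq g(K)-1$, so $t_i(K) \geq 1$.

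Combined with the standard fact $d(\mathcal P) = 2$ (from the sharpness of the $-E_8$-plumbing), the inequality of Lemma 9 would yield $-(n-7) + (n+1) - 4\cdot 2 \leq -8 t_i(K)$, i.e.\ $0 \leq -8 t_i(K)$, contradicting $t_i(K) \geq 1$. I expect the main subtlety to be the parity-plus-covering argument that produces the index $i$; once that is in hand, the final contradiction is immediate from plugging explicit values into Lemma 9.
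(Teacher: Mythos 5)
Your proposal is correct and follows essentially the same route as the paper: apply Theorem 4, and in the $-E_8 \oplus -\mathbb Z^{n-7}$ case feed a characteristic covector that vanishes on the $-E_8$ summand and has all coordinates $\pm 1$ on the diagonal summand into Lemma 9, using $d(\mathcal P)=2$ to get $0 \leq -8t_i(K)$. The only cosmetic difference is bookkeeping: the paper normalizes the diagonal coordinates of $\tau$ to be non-negative so that $i=(\langle \mathfrak c,\tau\rangle+p)/2$ lies in $[0,p/2]$ and then concludes $p\geq 2g(K)$, whereas you locate a valid $|i|\leq p/2$ by your parity-and-residue argument and contradict $t_i(K)\geq 1$ directly.
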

\begin{proof}
Since $Q_Z$ is negative definite, it follows by Theorem 4 that $Q_Z\cong -\mathbb Z^{n+1}$ or $Q_Z \cong -E_8\oplus -\mathbb Z^{n-7}$. Suppose $Q_Z\cong -E_8 \oplus -\mathbb Z^{n-7}$, and let $\tau = (s, \sigma)$ where $s \in - E_8$, $\sigma\in -\mathbb Z^{n-7}$, and by change of basis we have $0\leq \sigma_0 \leq \sigma_1\leq \ldots \leq \sigma_{n-8}$, and let $\mathfrak c = (\underbrace{0,\ldots,0}_8,\underbrace{1,\ldots,1}_{n-7})$. Then $$0\geq \langle \mathfrak c, \tau\rangle\geq \langle \tau, \tau\rangle=-p.$$ Since $0 = \mathfrak c ^2 + n+1 - 8 \leq -8t_i(K)$ for $i = \frac{\langle\mathfrak c, \tau\rangle + p}{2}$ by Lemma 9, it follows that $t_i(K)=0$, so $i\geq g(K)$, $\langle \mathfrak c, \tau \rangle + p \geq 2g(K)$, and therefore $p>2g(K)-1$.
\end{proof}

\begin{proof}[Proof of Theorem 3]
By Lemma 10, we have an embedding $Q_W\oplus Q_X\hookrightarrow Q_Z = -\mathbb Z^{n+1}$, where the generator of $Q_W$ is sent to some $\tau\in -\mathbb Z^{n+1}$ satisfying $0\leq \tau_0\leq\cdots\leq \tau_n$ and $\langle \tau, \tau\rangle =-(2g(K)-1)$. By Lemma 9, we have that $\mathfrak c^2 + (n+1) -8 \leq -8t_i(K)$ for all $|i|\leq (2g(K)-1)/2$ and $\mathfrak c \in \Char(-\mathbb Z^{n+1})$ such that $\langle\mathfrak c , \tau \rangle + 2g(K)-1 \equiv 2i \ \mod 2(2g(K)-1)$, and, by the sharpness of $X$, for every $|i|\leq (2g(K)-1)/2$ there exists $\mathfrak c$ attaining equality. Note that for any $\mathfrak c \in \{\pm1\}^{n+1}$, we have $\mathfrak c^2 + (n+1) -8 = -8$ and $-8t_i(K)\leq -8$ for all $|i|\leq g(K)-1= \lfloor(2g(K)-1)/2\rfloor$. Let $f(K) = \min\{i\geq 0\ | \ t_i(K)= 1\}$. Then we have the equality
\begin{equation}
    \{\langle \mathfrak c, \tau \rangle \ | \ \mathfrak c \in \{\pm 1\}^{n+1}\} = \{j \in [(2g(K)-1)-2f(K), (2g(K)-1) + 2f(K)]\ | \ j \equiv 1 \ \mod 2\},
\end{equation}
which we rewrite as
\begin{equation}
    \{|\langle \chi, \tau\rangle|\ |\ \chi \in \{0,1\}^{n+1}\} = \{0, 1, \dots, |\tau|_1\},
\end{equation}
from which see that $\tau$ is a changemaker \cite[Lemma 3.2]{Gre15}.

To see that $Q_X = (\tau)^\perp$ on the nose, observe that $\rk \ Q_X = \rk \ (\tau)^\perp$ and $\disc(Q_X) = |H_1(K(2g(K)-1))| = 2g(K)-1 = |\tau| = \disc((\tau)^\perp)$ using \cite[Lemma 3.10]{Gre13} at the last step, so the two lattices coincide.
\end{proof}

\subsection{Linear lattices.}

Let $p>q>0$ be integers. There is a unique continued fraction expansion $$p/q =[x_1,x_2,\ldots,x_n]^-= x_1 - \frac{1}{x_2-\frac{1}{\ddots- \frac{1}{x_n}}}$$
with each $x_i\geq 2$ an integer. The lens space $L(p,q)$ is the oriented boundary of the negative definite 4-manifold $X(p,q)$ given by attaching 4-dimensional 2-handles to a linear chain of $n$ unknots in the boundary of $B^4$, where the framing of the $i^\text{th}$ handle attachment is $-x_i$ (see Figure 2). We note that $X(p,q)$ is sharp, and that $Q_{X(p,q)}$ is indecomposable.

\begin{figure}
   \centering
\begin{tikzpicture}[scale=.95]
\begin{knot}[clip width = 5, flip crossing = 1, flip crossing = 3, flip crossing = 6]
\strand[ultra thick] (0,0) to[curve through={(-1,1)..(0,2)..(1,1)}] (0,0);
\strand[ultra thick] (1.3,0) to[curve through={(.3,1)..(1.3,2)..(2.3,1)}](1.1,0);
\strand[ultra thick]  (2.6 + .5,1-0.86602540378) to[curve through={(2.6,0)..(1.6,1)..(2.6,2)}] (2.6 + 0.5, 1+0.86602540378);
\node at (3.6,1){$\Large \cdots$};
\strand[ultra thick] (3.6+.5, 1-0.86602540378) to[curve through={(4.6,0)..(5.6,1)..(4.6,2)}] (3.6+.5,1+0.86602540378);
\strand[ultra thick] (5.9,0) to[curve through={(4.9,1)..(5.9,2)..(6.9,1)}](5.9,0);

\node at (0,2.3) {$\Large -x_1$};
\node at (1.3,2.3) {$\Large -x_2$};
\node at (2.6, 2.3) {$\Large -x_3$};
\node at (4.6, 2.3) {$\Large -x_{n-1}$};
\node at (5.9,2.3){$\Large -x_n$};

\end{knot}
\end{tikzpicture}
    \caption{A Kirby diagram for $X(p,q)$.}
    \label{fig:kirby_diag_X}
\end{figure}
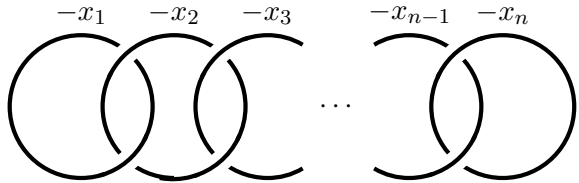

The connected sum of lens spaces $\#_{i=1}^mL(p_i,q_i)$ bounds a canonical sharp 4-manifold $X:=\natural_{i=1}^m X(p_i,q_i)$, a Kirby diagram for which is given by the disjoint union of the surgery diagrams for $X(p_i,q_i)$, $1\leq i \leq m$. The lattice $Q_X \cong \oplus_{i=1}^mQ_{X(p_i,q_i)}$ then contains $m$ indecomposable summands. Improving on our initial appeal to the surgery exact triangle in Heegaard Floer homology, the basic result that a changemaker lattice has at most two indecomposable summands \cite[Lemma 5.1]{Gre13} allows us to conclude the following.

\begin{prop}
If $K(p/q)$ is a connected sum of more than two lens spaces, then $p/q > 2g(K)-1$. 
\end{prop}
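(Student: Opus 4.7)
The plan is to prove Proposition 11 by combining the basic genus bound from the surgery exact triangle with Theorem 3 and the structural constraint on changemaker lattices noted in the paragraph immediately preceding the statement. Since any connected sum of lens spaces is an L-space, the hypothesis that $K(p/q) \cong \#_{i=1}^m L(p_i,q_i)$ makes $K$ an L-space knot, so the argument sketched in the introduction (via the surgery exact triangle in Heegaard Floer homology) delivers the weak inequality $p/q \geq 2g(K) - 1$. The remaining task is to exclude equality.

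I would split into two cases according to whether $p/q$ is an integer. If $p/q \notin \bZ$, then since $2g(K) - 1$ is an integer, the inequality $p/q \geq 2g(K) - 1$ is automatically strict. So suppose instead that $p/q$ is an integer equal to $2g(K)-1$; writing the slope in lowest terms, this forces $q = 1$ and $p = 2g(K)-1$. The surgery $K(p) = \#_{i=1}^m L(p_i,q_i)$, with $m \geq 3$, then bounds the canonical sharp 4-manifold $X = \natural_{i=1}^m X(p_i,q_i)$, and $H_1(X) = 0$ since each $X(p_i,q_i)$ is built from 2-handles on a chain of unknots in $B^4$. All hypotheses of Theorem 3 are therefore in place, and we conclude that $Q_X$ embeds in $-\bZ^{n+1}$ as the orthogonal complement to a changemaker, i.e., that $Q_X$ is a changemaker lattice.

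On the other hand, $Q_X \cong \bigoplus_{i=1}^m Q_{X(p_i,q_i)}$ decomposes as a direct sum of $m \geq 3$ indecomposable lattices, since indecomposability of each $Q_{X(p_i,q_i)}$ was recorded in Subsection 2.2. Greene's \cite[Lemma 5.1]{Gre13} asserts that a changemaker lattice contains at most two indecomposable summands, contradicting the previous paragraph and ruling out $p/q = 2g(K)-1$. The real work—producing a changemaker embedding from the 4-manifold data when the ambient space is $\mathcal P$ rather than $S^3$—has already been carried out in Theorem 3, so once that result is in hand the proposition reduces to bookkeeping; I do not anticipate any genuine obstacle beyond verifying that the canonical plumbing $X$ meets the hypotheses of Theorem 3 (sharpness and $H_1$ torsion-free), both of which are immediate.
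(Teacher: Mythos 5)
Your handling of positive slopes is exactly the paper's argument: non-integral slopes give strictness for free, and at $p/q=2g(K)-1$ (so $q=1$) the canonical filling $X=\natural_{i=1}^m X(p_i,q_i)$ is sharp with $H_1(X)=0$, Theorem 3 makes $Q_X$ the complement of a changemaker, and Greene's bound of at most two indecomposable summands for a changemaker lattice gives the contradiction. That part is fine.

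The genuine gap is your opening step: you claim the surgery exact triangle delivers $p/q\geq 2g(K)-1$ from the mere hypothesis that $K(p/q)$ is an L-space, but this bound is only available (and is only stated in the paper) for $p/q>0$. For negative slopes it is false: if $K$ bounds a disk in $\mathcal P$, then $K(p/q)=\mathcal P\,\#\,{-L(p,q)}$ is an L-space for every $p/q<0$, while $2g(K)-1=-1$. So the proposition's content for $p/q<0$ --- that no negative surgery on a knot in $\mathcal P$ yields a connected sum of more than two lens spaces --- is not vacuous and is not touched by your argument. The paper spends the second half of its proof on precisely this case: after checking $|p/q|>1$, it expands $|p/q|=[x_1,\ldots,x_n]^-$ with each $x_i\geq 2$, forms the negative definite trace cobordism $W_{p/q}(L)$ of the associated chain link, with $\partial W_{p/q}(L)=-\mathcal P\sqcup K(p/q)$, and caps off with the canonical sharp filling of the connected sum of lens spaces oriented as $-K(p/q)$. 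The result is a negative definite $4$-manifold with boundary $-\mathcal P$, which is impossible since Donaldson's theorem would then force $-E_8$ to embed in a diagonal lattice. (The slope $p/q=0$ is also excluded, since then $H_1(K(0))$ is infinite.) To complete your proof you must either add this negative-slope argument or justify why $p/q>0$ may be assumed; as written, the case split you perform never rules it out.
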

\begin{proof}
If $K(p/q)$ is a connected sum of more than two lens spaces, it is an L-space bounding a sharp 4-manifold whose intersection form has more than two indecomposable summands, so if $p/q$ is positive, then $p/q>2g(K)-1$ by combining an appeal to the surgery exact triangle, Theorem 3, and \cite[Lemma 5.1]{Gre13}. If $p/q<0$, we may build a negative definite 4-manifold with boundary $-\mathcal P$---a contradiction since a familiar application of Donaldson's theorem would then imply that $-E_8$ embeds in a diagonal lattice, but it does not. By combining an appeal to the surgery exact triangle and the observation that $|H_1(K(p/q))|=|p|$, we see that $|p/q|>1$, since if $g(K)= 0$, then $K$ bounds a disk and thus $K(p/q)=\mathcal P \# -L(p,q)$. Let $x_i\geq 2$, $1\leq i\leq n$, be integers satisfying $[x_1,\ldots, x_n]^-=|p/q|$. Let $L = K_1 \cup \cdots \cup K_n$ be the framed link in $\mathcal P$ where $K_1 = K$, $K_i$ is a meridian of $K_{i-1}$ for $2\leq i\leq n$, and the framing of $K_i$ is $-x_i$, $1\leq i \leq n$. Denote by $W_{p/q}(L)$ the trace cobordism of surgery on the framed link $L$, which has negative definite intersection form and $\partial W_{p/q}(L)=-\mathcal P \coprod K(p/q)$. Let $X$ be the canonical sharp 4-manifold with boundary a connected sum of lens spaces oriented as $-K(p/q)$ described above. Then $Z:=W_{p/q}(L)\cup_{K(p/q)}X$ is negative definite with boundary $-\mathcal P$. We conclude that $p/q >2g(K)-1$.
\end{proof}

\section{An essential annulus in $\mathcal P_K$.}

With Proposition 11 in hand, if $K(p/q)$ is a connected sum of more than two lens spaces, then we may use the following theorem of Matignon--Sayari to complete the proof of Theorem 1.

\begin{thm}\cite[Theorem 1.1]{MS03}
Let $M$ be an irreducible 3-manifold with boundary a torus $T$. Let $\lambda$ be a slope on $T$ that bounds an orientable surface in $M$ and $g$ the genus of this surface. If there exists a reducing slope $r$, then either $\Delta(r, \lambda)\leq 2g-1$ or else the minimum geometric intersection number of an essential 2-sphere in $M(r)$ with the core of $r$-Dehn filling is 2.\qed
\end{thm}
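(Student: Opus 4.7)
The plan is to analyze the intersection of a minimum-genus surface bounded by $\lambda$ with a reducing sphere for the $r$-filling, via the Gordon--Luecke labeled-graph machinery as refined by Scharlemann and Hoffman. Let $S\subset M$ be an orientable surface of genus $g$ with $\partial S$ a single curve of slope $\lambda$, chosen to minimize genus. Let $\widehat{P}\subset M(r)$ be an essential 2-sphere, isotoped to minimize $n:=|\widehat{P}\cap c|$, where $c$ is the core of the $r$-Dehn filling. Irreducibility of $M$ forces $n\geq 1$; the case $n=1$ would make $\widehat{P}\cap M$ a compressing disk for $T$, degenerating $M$ into a solid torus, and $n=2$ is exactly the second alternative of the theorem. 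So we may assume $n\geq 3$ and aim to prove $\Delta:=\Delta(r,\lambda)\leq 2g-1$.

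Next, set $P:=\widehat{P}\cap M$, a planar surface in $M$ with $n$ boundary circles of slope $r$ on $T$, and put $S$ and $P$ in general position minimizing $|S\cap P|$. Irreducibility of $M$, incompressibility of $S$ (from the minimal-genus hypothesis), and the standard innermost-disk/outermost-arc arguments eliminate closed curves and trivial arcs from $S\cap P$; the remaining components are essential arcs. Record these in the usual fat-vertex graphs $G_S$ on $S$ and $G_P$ on $P$: the unique vertex of $G_S$ has valence $n\Delta$, each vertex of $G_P$ has valence $\Delta$, and every edge carries a pair of labels in $\{1,\ldots,n\}$ recording which boundary components of $P$ it joins.

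The combinatorial heart, following Hoffman \cite{Hof98}, is the analysis of Scharlemann cycles in $G_P$. Each Scharlemann cycle bounds a disk in $P$ whose union with meridian disks of the $r$-filling solid torus yields a new surface in $M(r)$ of controlled topology; when $n\geq 3$, such a configuration either lets one compress $S$ to a strictly lower-genus surface still bounded by $\lambda$ (contradicting minimality of $g$) or produces an essential sphere meeting $c$ in strictly fewer points (contradicting minimality of $n$), unless $\Delta$ is small enough that no such Scharlemann cycle can exist. A parity-and-labeling count in $G_S$, whose single vertex sits on a genus-$g$ surface so that the number of mutually parallel edge classes is linearly controlled by $g$, then upgrades ``small enough'' to the bound $\Delta\leq 2g-1$.

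The main obstacle is uniformizing this dichotomy over all $n\geq 3$ simultaneously. Hoffman's original argument is most cleanly phrased for low intersection numbers, so the Matignon--Sayari extension must locate extended Scharlemann cycles and ``great webs'' in $G_P$ whose presence is guaranteed for any high-valence graph on a planar surface, and then translate each such combinatorial structure into one of the two above contradictions while carefully tracking the boundary labels of edges near $\partial P$. Once that dictionary is established, assembling $\Delta\leq 2g-1$ reduces to an Euler characteristic count on $S$ and an exhaustion of the surviving edge-parallelism patterns compatible with $n\geq 3$.
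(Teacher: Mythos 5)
First, note that the paper does not prove this statement at all: it is quoted from Matignon--Sayari \cite{MS03} (the immediate \emph{qed} marks it as an external citation), so the only ``proof'' in the paper is the reference. Your outline does point toward the machinery that the actual published argument uses---a reducing sphere $\widehat{P}$ minimizing intersection with the core, the punctured surfaces $P=\widehat{P}\cap M$ and $S$, the two labelled fat-vertex intersection graphs $G_P$, $G_S$ in the Gordon--Luecke style, and Scharlemann cycles/great cycles as in Hoffman \cite{Hof98}---so the strategy is the right one in spirit.

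However, as written your proposal is a plan rather than a proof: the two places where all of the difficulty lives are asserted, not argued. Specifically, (i) the claim that for $n\geq 3$ a Scharlemann cycle, extended Scharlemann cycle, or great web in $G_P$ forces either a genus-reducing compression of $S$ or an essential sphere meeting the core in fewer points ``unless $\Delta$ is small enough,'' and (ii) the claim that a counting argument on $G_S$ upgrades ``small enough'' to $\Delta\leq 2g-1$, are precisely the content of Hoffman's and Matignon--Sayari's theorems; deferring them means nothing has actually been established, and no explicit combinatorial lemma is stated that one could check. There are also smaller gaps: the valence count $n\Delta$ for the single vertex of $G_S$ presupposes that $\lambda$ bounds a surface with connected boundary, which should be arranged or justified; and the $n=1$ case needs real treatment---there $P$ is a disk of boundary slope $r$, so $T$ compresses and irreducibility makes $M$ a solid torus, a configuration you must rule out (or analyze, since filling a solid torus along its meridian produces $S^1\times S^2$, whose essential sphere meets the core once) rather than dismiss as ``degenerating.'' If your intent is to use the result as the paper does, the correct move is simply to cite \cite{MS03}; if your intent is to reprove it, the combinatorial core must be supplied.
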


Suppose that $K\subset \mathcal P$ admits a surgery to a connected sum of more than two lens spaces. For us, $\lambda$ is the 0-framing of $K$, and the slope of this surgery $p/q$ is strictly greater than $2g-1$ by Proposition 11. In particular, $\Delta(0,p/q) = p >2g-1$, so $K(p/q)$ contains an essential 2-sphere which meets the core of $p/q$-Dehn surgery in exactly 2 points by Theorem 12. We use this information to complete the proof of Theorem 1 with the following two lemmas. 

\begin{lem}
Let $p/q > 2g-1$ with $q>1$, and suppose that $K(p/q)$ is reducible for $K\subset \mathcal P$. Then $K$ is an exceptional fiber of $\mathcal P$ and the $(p,q)$-cable of $K$ is a regular fiber.
\end{lem}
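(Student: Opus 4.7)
The plan is to extract an essential annulus from the Matignon--Sayari $2$-sphere, show $\mathcal{P}_K$ admits a Seifert fibration in which this annulus is vertical, and then extend the fibration across $N(K)$ to identify $K$ as an exceptional fiber of the standard Seifert fibration of $\mathcal{P}$. To begin, $\mathcal{P}_K$ is irreducible: since $\mathcal{P} \ne S^{3}$ is irreducible, no knot in $\mathcal{P}$ is contained in a $3$-ball (else removing the ball would leave a second $3$-ball, forcing $\mathcal{P} = S^{3}$), and so any essential $2$-sphere in $\mathcal{P}_K$ would bound a $3$-ball in $\mathcal{P}$ that must contain $K$, which is impossible. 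Taking $\lambda$ to be the Seifert longitude and $r = p/q$, the hypothesis yields $\Delta(r, \lambda) = p > 2g - 1$, so Theorem 12 applies: $K(p/q)$ contains an essential $2$-sphere $S$ meeting the core of the $p/q$-filling transversely in exactly two points. Removing the two disks that $S$ cuts out inside the filling solid torus yields an essential annulus $A \subset \mathcal{P}_K$ both of whose boundary components lie on $\partial \mathcal{P}_K$ with slope $p/q$.

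Next, I argue that $\mathcal{P}_K$ is Seifert fibered with $A$ vertical. Since $\mathcal{P}_K$ is irreducible and annular, and cusped hyperbolic $3$-manifolds are anannular, JSJ theory leaves two cases: either $\mathcal{P}_K$ is Seifert fibered, or it contains an essential torus $T$. In the latter case, $\mathcal{P}$ being atoroidal as a small Seifert fibered space forces $T$ to compress in $\mathcal{P}$ and bound a solid torus $V \subset \mathcal{P}$ with $K$ sitting non-trivially in its interior; a standard analysis using $A$ alongside $T$ then exhibits $K$ as a cable knot in $\mathcal{P}$, but the cabling formula constrains any reducible surgery slope on a cable to be an integer, contradicting $q > 1$. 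Hence $\mathcal{P}_K$ is Seifert fibered, and since $\mathcal{P}$ is a small Seifert fibered space with nonzero Euler number, $\mathcal{P}_K$ admits no horizontal essential annuli, so $A$ is vertical.

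Finally, I extend the Seifert fibration of $\mathcal{P}_K$ to one of $\mathcal{P}$ by filling $N(K)$ with a fibered solid torus compatibly with the boundary fibers: the regular-fiber slope on $\partial \mathcal{P}_K$ equals the boundary slope $p/q$ of $A$, which differs from the meridian $1/0$ since $q > 1$, so such a fibered extension exists and exhibits $K$ as a Seifert fiber of $\mathcal{P}$. By the Boileau--Otal uniqueness of Seifert fibrations on $\mathcal{P}$, this fibration is isotopic to the standard one over $S^{2}(2,3,5)$. Since the regular-fiber slope on the boundary of a tubular neighborhood of a fiber of multiplicity $\alpha$ has $q$-coordinate $\alpha$ in the Seifert framing, while at a regular fiber it has $q$-coordinate $1$, the hypothesis $q > 1$ forces $K$ to be an exceptional fiber of multiplicity $q$; by construction the $(p,q)$-cable of $K$ is then a regular fiber of $\mathcal{P}$. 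I expect the main obstacle to be the essential-torus case of the JSJ analysis, specifically using $A$ cleanly enough to exhibit $K$ as a cable so that the cabling formula applies.
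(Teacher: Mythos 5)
Your beginning (extracting the essential annulus $A$ of boundary slope $p/q$ from the Matignon--Sayari sphere) and your ending (extending a Seifert fibration of $\mathcal P_K$ whose boundary fibers have slope $p/q$ over $\nu(K)$, so that $K$ becomes a fiber of multiplicity $q>1$ and the $(p,q)$-cable a regular fiber) match the paper. The gap is in the middle, and it is exactly the step you flag as ``the main obstacle'': you never actually prove that $\mathcal P_K$ is Seifert fibered. Your JSJ dichotomy reduces this to the toroidal case, but there the assertion that ``a standard analysis using $A$ alongside $T$ exhibits $K$ as a cable knot'' is not standard -- an essential torus in $\mathcal P_K$ bounding a solid torus $V\supset K$ only exhibits $K$ as a satellite, and upgrading ``reducible surgery on a satellite'' to ``cable'' is, even in $S^3$, a genuine theorem of Scharlemann proved with sutured manifold theory, with no off-the-shelf analogue for knots in $\mathcal P$. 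Likewise ``the cabling formula constrains any reducible surgery slope on a cable to be an integer'' is unjustified: the cabling slope is integral, but a non-cabling slope at distance one from it converts the surgery into a (possibly non-integral) surgery on the companion, so ruling out reducibility there requires a further argument (e.g.\ a recursion over companions), not a one-line appeal to the formula. A smaller but real error: your justification that $\mathcal P_K$ is irreducible rests on the claim that no knot in $\mathcal P$ lies in a $3$-ball, which is false (any local knot does); irreducibility of the exterior is genuinely needed to invoke Matignon--Sayari, and in the paper's applications it holds because otherwise $\mathcal P$ would appear as a prime summand of a connected sum of lens spaces. Also, ``$\mathcal P$ has nonzero Euler number, hence no horizontal essential annuli in $\mathcal P_K$'' needs care, since the Euler number of a manifold with boundary is only defined relative to a framing; the honest argument is the orbifold Euler characteristic count for horizontal surfaces, ruling out the base $D^2(2,2)$.

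The paper avoids all of this machinery by using the annulus itself to build the Seifert structure. The boundary $\partial A$ cuts $\partial\mathcal P_K$ into two annuli $A_1,A_2$, and the tori $T_i=A\cup A_i$ each bound solid tori $V_i$ in $\mathcal P$ by atoroidality. One then shows $\nu(K)\not\subset V_i$: if $\nu(K)\subset V_1$, the fibration of $V_1$ by $(p,q)$-curves has $K$ as its unique exceptional fiber, forcing $A$ to be isotopic to $A_2$ in $V_1\setminus\nu(K)$ and hence inessential, a contradiction. Therefore $\mathcal P_K=V_1\cup_A V_2$ is visibly Seifert fibered over the disk with two exceptional fibers (since $A$ is not boundary parallel), and the extension over $\nu(K)$ proceeds as in your final paragraph. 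If you want to salvage your outline, you should replace the JSJ/satellite/cabling-formula step with this direct solid-torus argument, or else supply a genuine proof of the satellite-implies-cable step for reducible surgeries in $\mathcal P$.
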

\begin{proof}
We invoke Theorem 12 to identify an essential 2-sphere $\hat A\subset K(p/q)$ which intersects the core of the surgery solid torus in two points. Let $A$ denote the essential annulus $\hat A \cap \mathcal P_K$. Then $A$ is separating in $\mathcal P_K$, and $\partial A$ separates $\partial \mathcal P_K$ into two annuli $A_1, A_2$ with $\partial A_i = \partial A$ for $i=1,2$. Observe that each boundary component of $A, A_1,$ and $A_2$ is a $(p,q)$-curve on $\partial \mathcal P_K$. Denote by $T_i$ the torus $A_i \cup A$, $i = 1,2$.

Since $\mathcal P$ is atoroidal, each $T_i$ bounds a solid torus $V_i$ in $\mathcal P$. We will show that in fact each $V_i$ is contained in $\mathcal P_K$, giving a decomposition of $\mathcal P_K$ as $V_1\cup_A V_2$. Then $\mathcal P_K$ admits a Seifert fibering with two exceptional fibers, so $K$ is an exceptional fiber of $\mathcal P$. 

\begin{figure}
    \centering
\begin{tikzpicture}[scale=.8]
\draw[ultra thick, red] (0,0) to[curve through={(-2,2)}] (0,4);
\draw[ultra thick, purple] (0,4) to[curve through={(3,2)}] (0,0);
\draw[ultra thick, blue] (0,4) to[curve through={(2,2)}] (0,0);
\node at (0,2) {\Large $\bullet$};
\node at (0,1.5) {\Large $p/q$};
\node at (-2.5, 2) {\Large $A_1$};
\node at (1.5,2) {\Large $A_2$};
\node at (3.5,2){\Large $A$};
\node at (0,3){\Large $\nu(K)$};
\node at (.5,5){\Large $V_1$};
\end{tikzpicture}

    \caption{The implied Seifert fibering of $V_1$ if $\nu(K) \subset V_1$. Note that $A$ is boundary compressible in $\mathcal P_K$.}
    \label{fig:canthappen}
\end{figure}
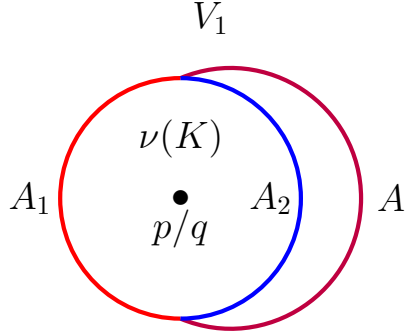

To see that $V_1$ is contained in $\mathcal P_K$, suppose to the contrary that $\nu(K)\subset V_1$. Then $A_2\subset V_1$. Consider the Seifert fibering of $V_1$ over $D^2$ induced by extending the fibering of $A_1$ by $(p,q)$-curves on $\partial \nu(K)$ over $T_1$. In this Seifert fibering, $A_2$ is isotopic to a union of regular fibers and $K$ appears as an exceptional fiber of type $p/q$ (see Figure 3). Since $V_1$ is a solid torus, $K$ is then the only exceptional fiber in this Seifert fibering. It follows that $A$ is isotopic to $A_2$ in $V_1\setminus \nu(K)$, and therefore $A$ is not essential in $\mathcal P_K$.

By the same argument, we see that $V_2$ is also contained in $\mathcal P_K$, hence $\mathcal P_K = V_1 \cup_A V_2$ as desired. Furthermore, since $A$ is not boundary parallel in $\mathcal P_K$, the Seifert fibering of $\mathcal P_K$ induced by those on $V_1$ and $V_2$ has two exceptional fibers (see Figure 4). Equip $\nu(K)$ with the Seifert fibering induced by $(p,q)$-curves on $\partial \nu(K)$ to see that $K$ is an exceptional fiber of $\mathcal P = \mathcal P_K\cup_{A_1 \cup A_2}\nu(K)$, and that the $(p,q)$-cable of $K$ is a regular fiber.\end{proof}

\begin{figure}
    \centering
\begin{tikzpicture}[scale=.8]
\draw[ultra thick, red] (0,0) to[curve through={(-2,2)}] (0,4);
\draw[ultra thick, purple] (0,4)--(0,0);
\draw[ultra thick, blue] (0,4) to[curve through={(2,2)}] (0,0);
\node at (-1,2) {\Large $\bullet$};
\node at (1,2) {\Large $\bullet$};
\node at (-2.5, 2) {\Large $A_1$};
\node at (2.5,2) {\Large $A_2$};
\node at (.35,3.15){\Large $A$};
\node at (0,4.5){\Large $\mathcal P_K$};
\end{tikzpicture}

    \caption{The Seifert fibering of $\mathcal P_K$ induced by the decomposition $\mathcal P_K = V_1\cup_AV_2$ has at least two exceptional fibers since $A$ is not parallel into $\partial \mathcal P_K$. It has at most two exceptional fibers since it is the union of the solid tori $V_1$ and $V_2$ along $A$. The annulus $A$ is a union of regular fibers in the induced Seifert fiberings of both $V_1$ and $V_2$.}
    \label{fig:exceptional}
\end{figure}
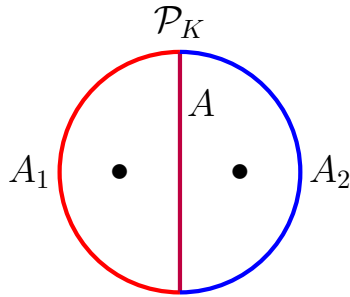

\begin{lem}
Let $p\geq 2g$ be an integral reducing slope for $K \subset \mathcal P$. Suppose that $K(p)$ is a connected sum of more than two lens spaces. Then $K$ is a regular fiber of $\mathcal P$.
\end{lem}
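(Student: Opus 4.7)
The plan is to mimic the proof of Lemma~13 with adaptations for the integer-slope case $q = 1$. Since $p \geq 2g$ implies $p > 2g - 1$, Theorem~12 will produce an essential 2-sphere $\hat A \subset K(p)$ meeting the core of the $p$-Dehn filling in exactly two points; I set $A := \hat A \cap \mathcal P_K$, an essential annulus whose boundary consists of two slope-$p$ curves on $\partial \nu(K)$. These curves split $\partial \nu(K)$ into annuli $A_1, A_2$, and each torus $T_i := A_i \cup A$ bounds a solid torus $V_i \subset \mathcal P$ by atoroidality.

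The first step is to show that $\nu(K) \subset V_i$ for at least one $i$. I would argue by contradiction: if both $V_1, V_2 \subset \mathcal P_K$, then $\mathcal P_K = V_1 \cup_A V_2$ inherits a Seifert fibering with at most two exceptional fibers in which $\partial \mathcal P_K$ is fibered by slope-$p$ curves. Since $q = 1$, extending by filling $\nu(K)$ with $K$ as a regular fiber would then produce a Seifert fibering of $\mathcal P$ with at most two exceptional fibers, contradicting the fact from \cite{BO91} that any Seifert fibering of $\mathcal P$ is isotopic to $M((2,-1),(3,1),(5,1))$ and hence has exactly three exceptional fibers. Without loss of generality, then, $\nu(K) \subset V_2$.

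In this setting, $A_1$ becomes a properly embedded essential annulus in the solid torus $V_2$ separating it into the two solid tori $\nu(K)$ and $V_2 \setminus \nu(K)$. By the classical characterization of such annuli, $K$ is isotopic to an $(a, b)$-cable of the core knot $K_0$ of $V_2$, and tracking how the boundary slope of $A_1$ translates to $K$'s meridian-longitude coordinates shows $p = ab$. Gordon's cabling formula then gives $K(p) = K_0(a/b) \# L(b, a)$, so $K_0(a/b)$ is a connected sum of more than one lens space.

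The main obstacle is handling the two sub-cases on $b$. If $b \geq 2$, then $a/b$ is a non-integer reducing slope for $K_0$, and combining Proposition~11 with Lemma~13 yields that $K_0$ is an exceptional fiber of $\mathcal P$ and that its $(a, b)$-cable---namely $K$---is a regular fiber, i.e., $K = F$. If $b = 1$, then $K(p) = K_0(p)$, so Lemma~14 applied inductively to $K_0$ (which strictly simplifies $K$ in the satellite hierarchy, so the recursion terminates) gives $K_0 = F$; the requirement that $F(p)$ have more than two lens space summands then forces $p$ to be the regular fiber slope of $F$ in $\mathcal P$'s Seifert fibering, whereupon $K = C_{p, 1}(F)$ is a push-off of a regular fiber, hence isotopic to $F$.
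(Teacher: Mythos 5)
Your overall strategy (extract the annulus $A$ from Theorem~12, use atoroidality to get solid tori, and reduce to the cabling situation of Lemma~13) is sound, and your first step is a nice variant: the paper does not argue that one of the $V_i$ must contain $\nu(K)$; instead it uses integrality of $p$ to find an annulus $A'\subset\nu(K)$ with $\partial A'=\partial A$ and $K\subset A'$, so that $K$ lies on the torus $T=A\cup A'$, which bounds a solid torus with core $E$, exhibiting $K$ directly as a cable of $E$. The genuine gap in your write-up is the case $b=1$. An $(a,1)$-curve on $\partial V_2$ is isotopic to the core $K_0$, so in that case $K_0$ is just $K$ again: there is no strict decrease in any satellite hierarchy, and ``Lemma~14 applied inductively to $K_0$'' is circular --- you would be assuming the lemma for the very knot you are trying to prove it for. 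The correct move is to rule out $|b|\leq 1$ using essentiality of $A$: if $b=0$ the components of $\partial A$ bound meridian disks of $V_2$ and $A$ is compressible; if $|b|=1$ then, because the slope is integral, $\nu(K)$ is a product region $A_1\times I$ between $A_1$ and $A_2$, and $V_2\cong A\times I$, so $N_1=V_2\setminus\mathrm{int}\,\nu(K)$ is a parallelism in $\mathcal P_K$ between $A$ and $A_1$, making $\hat A$ inessential in $K(p)$. This is exactly what underlies the paper's assertion that $K$ is a $(p,q)$-cable with $|q|\geq 2$; with that in hand only your first case occurs and Lemma~13 finishes the proof.

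Two smaller points. First, your appeal to Proposition~11 for the companion is not literally available: from $K(p)=K_0(a/b)\# L(b,a)$ with $K(p)$ having more than two summands you only get that $K_0(a/b)$ has \emph{at least} two lens space summands, whereas Proposition~11 requires more than two. The needed inequality $a/b>2g(K_0)-1$ instead follows from the L-space surgery bound $a/b\geq 2g(K_0)-1$ together with the fact that $a/b$ is not an integer (and positivity, which holds since $p=ab\geq 2g(K)\geq 2$ and $b\geq 2$); this is also how the hypothesis of Lemma~13 is met in the paper's argument. Second, calling $A_1$ \emph{essential} in $V_2$ is not right: every incompressible annulus in a solid torus is boundary-parallel, and here $A_1$ is parallel to $A_2$ through $\nu(K)$; the cabling conclusion you want comes from that parallelism (it shows $K$ is isotopic to the core of $A_2\subset\partial V_2$), not from essentiality.
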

\begin{proof}
We again invoke Theorem 12 to identify an essential properly embedded annulus $A$ in $\mathcal P_K$ with boundary slope $p$. Since $p$ is integral, there is an annulus $A'\subset \nu(K)$ with $\partial A' = \partial A$ and $K \subset A'$. Let $T= A\cup A'$. Since $\mathcal P$ is atoroidal, it follows that $T$ bounds a solid torus. Denote the core of this solid torus by $E$, and observe that $K$ is the $(p,q)$-cable of $E$ for some $p,q\in \mathbb Z$ with $|q|\geq 2$ such that $E(p/q)$ is a connected sum of at least two lens spaces. That $K$ is a regular fiber of $\mathcal P$ then follows from Lemma 13.
\end{proof}

\begin{proof}[Proof of Theorem 1]
Let $K \subset \mathcal P$, and suppose that $K(p/q)$ is a connected sum of more than two lens spaces. By Proposition 11, $p/q> 2g(K)-1$. By Lemma 13, we conclude that $q=1$, since if $p/q>2g(K)-1$ is non-integral and $K(p/q)$ is reducible, then $K(p/q)$ is a connected sum of two lens spaces. By Lemma 14, we conclude that $K = F$. We may furthermore identify that $p/q =|H_1(L(2,1)\#L(3,2)\#L(5,4))|=30$. 
\end{proof}

\begin{rmk}
If we can identify a reducing sphere that meets the core of surgery on $K$ in two points, for $K$ satisfying the hypotheses of Conjecture 5, the then topological arguments in this section can be adapted to prove the conjecture with help from the classification of embedded tori in Seifert fibered spaces \cite[Theorem VI.34]{Jac80}. For a Seifert fibered space over $S^2$ that is not an integer homology sphere L-space, i.e. not $S^3$ or $\pm \mathcal P$ \cite{Eft09}, parts of our argument break down. For example, if $K$ is non-trivial in homology then there need not be a slope on $\partial \nu(K)$ bounding an orientable surface in the exterior of $K$. Even if $K$ is nullhomologous, a positive L-space slope on $K$ need not be bounded below by $2g(K)-1$---in fact, every knot $K$ that is doubly primitive in a Brieskorn sphere that is not an L-space admits an integral surgery slope $p$ to a lens space, and it follows from \cite[Theorem 1]{Ras07} that $|p|\leq 2g(K)-1$.
\end{rmk}

\end{document}